\documentclass[11pt]{article}
\usepackage{amsmath}
\usepackage{amsthm}
\usepackage{amssymb,amsfonts}
\usepackage{hyperref}
\usepackage{latexsym}
\usepackage{todonotes}
\usepackage{nicefrac}
\usepackage{epsfig}
\usepackage{stmaryrd}
\usepackage{setspace}
\usepackage{tikz-cd}
\usepackage{enumerate}
\usepackage[all]{xypic}
\usepackage{bbm,ifpdf,tikz}
\ifpdf
\usepackage{pdfsync}
\fi

\oddsidemargin=0pt
\evensidemargin=0pt
\topmargin=0in
\headheight=0pt
\headsep=0pt
\setlength{\textheight}{9in}
\setlength{\textwidth}{6.5in}

\newtheorem{theorem}{Theorem}[section]

\newtheorem{lemma}[theorem]{Lemma}
\newtheorem{proposition}[theorem]{Proposition}
\newtheorem{corollary}[theorem]{Corollary}

\newtheorem{conjecture}[theorem]{Conjecture}

{
\theoremstyle{definition}

\newtheorem{remark}[theorem]{Remark}
}

\newcommand{\excise}[1]{}

\newcommand{\rk}{\operatorname{rk}}

\renewcommand{\and}{\qquad\text{and}\qquad}
\newcommand{\Ind}{\operatorname{Ind}}
\newcommand{\Res}{\operatorname{Res}}

\renewcommand{\cH}{\mathcal{H}}

\newcommand{\la}{\lambda}

\newcommand{\tM}{\tilde{M}}
\newcommand{\fS}{\mathfrak{S}}
\newcommand{\WT}{W_{\!T}}
\newcommand{\WS}{W_{\!S}}

\newcommand{\WJ}{W_{\!J}}
\newcommand{\WH}{W_{\!H}}
\newcommand{\beq}{\begin{eqnarray*}}
\newcommand{\eeq}{\end{eqnarray*}}

\begin{document}
\spacing{1.2}
\noindent{\Large\bf Equivariant Kazhdan--Lusztig theory of paving matroids}\\

\noindent{\bf Trevor Karn}\\
School of Mathematics, University of Minnesota, Minneapolis, MN 55455
\vspace{.1in}

\noindent{\bf George Nasr}\footnote{Supported by NSF grant DMS-2053243.}\\
Department of Mathematics, University of Oregon, Eugene, OR 97403
\vspace{.1in}

\noindent{\bf Nicholas Proudfoot}\footnote{Supported by NSF grants DMS-1954050, DMS-2039316, and DMS-2053243.}\\
Department of Mathematics, University of Oregon, Eugene, OR 97403
\vspace{.1in}

\noindent{\bf Lorenzo Vecchi}\footnote{Partially supported by the National Group for Algebraic and Geometric Structures, and their Applications (GNSAGA - INdAM)}\\
Dipartimento di Matematica, Universit\`a di Bologna, 40126 Bologna BO, Italy\\

{\small
\begin{quote}
\noindent {\em Abstract.}
We study the way in which equivariant Kazhdan--Lusztig polynomials, equivariant inverse Kazhdan--Lusztig polynomials, and equivariant $Z$-polynomials
of matroids change under the operation of relaxation of a collection of stressed hyperplanes.  This allows us to compute these polynomials for arbitrary paving matroids,
which we do in a number of examples, including various matroids associated with Steiner systems that admit actions of Mathieu groups.
\end{quote} }

\section{Introduction}
We consider in this paper three polynomial invariants of matroids:  the {\bf Kazhdan--Lusztig polynomial} \cite{elias-proudfoot}, 
the {\bf inverse Kazhdan--Lusztig polynomial} \cite{gao-xie}, and
the {\bf \boldmath{$Z$}-polynomial} \cite{proudfoot-zeta}.
These invariants have been computed for uniform matroids \cite{gpy-equi-kl,kazhdan-uniform,gao-xie}, then generalized to sparse paving matroids 
\cite{lee-nasr-radcliffe-rho-removed,lee-nasr-radcliffe-sparse,ferroni-vecchi}, and finally to arbitrary paving matroids \cite{fnv2021}.

The purpose of this paper is to generalize the results of \cite{fnv2021} to the equivariant setting.  
Suppose that $M$ is a matroid and $W$ is a finite group that acts on the ground set of $M$, preserving the set of bases.
We then have the {\bf equivariant Kazhdan--Lusztig polynomial} \cite{gpy-equi-kl}, 
the {\bf equivariant inverse Kazhdan--Lusztig polynomial} \cite{proudfootkls}, and
the {\bf equivariant \boldmath{$Z$}-polynomial} \cite{proudfoot-zeta}; these are polynomials whose coefficients are isomorphism classes
of $W$-representations over the rational numbers, and they have the property that taking dimensions recovers the non-equivariant polynomials.
The equivariant Kazhdan--Lusztig polynomials and inverse Kazhdan--Lusztig polynomials have been computed for uniform matroids \cite{gpy-equi-kl,GXY21-inverse-equivariant}, 
and it is straightforward to use these results to compute the equivariant $Z$-polynomials.
Our main result extends these computations to paving matroids.

Our approach is based on the notion of {\bf relaxation}.  If $M$ is a matroid of rank $k$, a hyperplane $H$ of $M$ is called {\bf stressed} if every $k$-element subset of $H$
is a circuit.  In this case, there is a new matroid $\tilde M$ whose bases consist of all of the bases for $M$ along with all of the $k$-element subsets of $H$.
A matroid is paving if and only if it can be transformed into a uniform matroid by a sequence of relaxations.
The operation of relaxation changes our three polynomials in a controlled way, thus one may leverage the formulas for uniform matroids to obtain formulas for
paving matroids \cite{fnv2021}.  We will use the same idea in the equivariant setting, though now rather than relaxing one stressed hyperplane at a time, we will relax
one $W$-orbit of stressed hyperplanes at a time.

\begin{remark}
The notion of stressed hyperplanes and their relaxations is quite new, appearing for the first time in \cite{fnv2021}.  When $H$ is a circuit-hyperplane, it is a much more well-known
operation; see for example \cite[Proposition 1.5.14]{oxley}.  A matroid is sparse paving if and only if it can be transformed into a uniform matroid by a sequence of relaxations
of circuit-hyperplanes.
\end{remark}

\begin{remark}
It is conjectured that asymptotically almost all matroids are sparse paving \cite{mayhew}, and a logarithmic version of this conjecture has been proved \cite{pendavingh-vanderpol}.
In this sense, the results of this paper allow us to compute our three equivariant polynomials for most matroids.
\end{remark}

We now give a more precise statement of our results.  Given a matroid $M$ equipped with an action of a finite group $W$, we denote the equivariant Kazhdan--Lusztig polynomial
of $M$ by $P_M^W(t)$, the equivariant inverse Kazhdan--Lusztig polynomial of $M$ by $Q_M^W(t)$, and the equivariant $Z$-polynomial of $M$ by $Z_M^W(t)$.

Given a stressed hyperplane $H$, we write $\tilde M$ to denote the matroid obtained by simultaneously relaxing all hyperplanes in the $W$-orbit of $H$.  
Note that the action
of $W$ on $M$ induces an action on $\tilde M$.  Let $\WH$ be the stabilizer of $H$ in $W$.  
The group $\WH$ acts on $H$, inducing a homomorphism from $W$
to the permutation group $\fS_H$.  If $h= |H|$ and we fix an ordering of $H$, then we can identify $\fS_H$ with the symmetric group $\fS_h$.
For any representation $V$ of $\fS_h$,  we will write $\Res_{\WH}^{\fS_h} V$ to denote the pullback of $V$ to a representation of $\WH$
(even though the homomorphism from $\WH$ to $\fS_h$ need not be an inclusion).

\begin{theorem}\label{main}
Fix integers $h\geq k\geq 1$.  There exist polynomials 
$p_{k,h}^{\fS_h}(t)$, $q_{k,h}^{\fS_h}(t)$, and $z_{k,h}^{\fS_h}(t)$,
each with isomorphism classes of $\fS_h$-representations as coefficients, such that for 
any matroid $M$ of rank $k$, any group $W$ of symmetries of $M$, and any stressed hyperplane $H$ of cardinality $h$, the following identities hold:
\beq P_{\tM}^W(t) &=& P_{M}^W(t) + \Ind_{\WH}^{W}\Res_{\WH}^{\fS_h}p_{k,h}^{\fS_h}(t)\\
Q_{\tM}^W(t) &=& Q_{M}^W(t) + \Ind_{\WH}^{W}\Res_{\WH}^{\fS_h}q_{k,h}^{\fS_h}(t)\\
Z_{\tM}^W(t) &=& Z_{M}^W(t) + \Ind_{\WH}^{W}\Res_{\WH}^{\fS_h}z_{k,h}^{\fS_h}(t).\eeq
\end{theorem}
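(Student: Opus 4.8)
The non-equivariant result of \cite{fnv2021} already tells us that there exist polynomials $p_{k,h}(t)$, $q_{k,h}(t)$, $z_{k,h}(t)$ with integer coefficients such that $P_{\tilde M}(t) = P_M(t) + p_{k,h}(t)$ and similarly for $Q$ and $Z$, whenever $M$ has rank $k$ and $H$ is a stressed hyperplane of size $h$. So the content of Theorem~\ref{main} is twofold: first, that in the equivariant setting the "correction term" depends only on $(k,h)$ and is built from a single universal $\fS_h$-representation-valued polynomial via $\Ind\circ\Res$; and second, of course, that it exists at all. The plan is to reduce everything to the uniform case. Observe that $H$, together with the elements of $M$ not lying on $H$ organized appropriately, looks locally like a uniform matroid $U_{k,h}$: the stressed condition says precisely that the restriction $M|_H$ (or rather the relevant localization) is $U_{k-1,h}$, and relaxation replaces it by $U_{k,h}$. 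I would first establish, in the non-equivariant setting, that the correction polynomials $p_{k,h}(t)$ etc.\ are exactly the differences $P_{U_{k,h}}(t) - P_{U_{k-1,h}\oplus(\text{stuff})}(t)$ computed from the uniform formulas — this is essentially already in \cite{fnv2021}, so I would cite it — and then upgrade this identity to one of $\fS_h$-representations by noting that every step in that argument is natural in the automorphisms of the ground set of the local piece, which is all of $\fS_h$.

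The key technical input is a \emph{naturality/functoriality} statement: the equivariant KL polynomial, equivariant inverse KL polynomial, and equivariant $Z$-polynomial are functorial with respect to matroid isomorphisms, and the combinatorial recursions defining them (the equivariant analogue of the Elias–Proudfoot recursion, and its inverse) commute with restriction of scalars along group homomorphisms. Concretely, if $\varphi\colon W' \to W$ is a group homomorphism and $M$ carries a $W$-action, then $P^{W'}_{M}(t) = \Res_{W'}^{W} P^W_M(t)$ where $M$ is regarded as a $W'$-matroid via $\varphi$; this follows because the whole construction is "representation-valued functorially in the group." Granting this, here is the argument. Work with $\fS_h$ acting on a "model" pair $(M_0, \tilde M_0)$ where $M_0$ is the truncation/elongation appropriate to rank $k$ with a single stressed hyperplane on $h$ points carrying the full $\fS_h$-action. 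Define $p_{k,h}^{\fS_h}(t) := P^{\fS_h}_{\tilde M_0}(t) - P^{\fS_h}_{M_0}(t)$ (and analogously $q, z$) — one must check this difference has honest representations, not virtual ones, as coefficients; I would prove this by checking it coefficient-by-coefficient, using that in the model case the relaxation genuinely only adds things, so the equivariant recursion shows the new contribution is a bona fide permutation-ish representation. Then for a general $(M, W, H)$: the subgroup $\WH \le W$ acts on the local model via the homomorphism $\WH \to \fS_h$, and the portion of $P^W_{\tilde M}(t) - P^W_M(t)$ supported near the orbit of $H$ is, by the non-equivariant computation done $W$-equivariantly together with the orbit decomposition $W/\WH$, exactly $\Ind_{\WH}^W$ of the local contribution, which by the naturality statement is $\Ind_{\WH}^W \Res_{\WH}^{\fS_h} p_{k,h}^{\fS_h}(t)$. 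The $Z$-polynomial case follows from the $P$ case by its defining formula $Z_M^W(t) = \sum_{F} P^{W_F}_{M^F}(t)\cdot(\text{stuff})$ pushed through $\Ind$, reduced to the same model computation.

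The main obstacle — and the place where I expect to spend most of the effort — is proving that the correction term really is an induction from $\WH$ of something that depends only on the $\fS_h$-action on the local model, i.e.\ that there is no "interference" between distinct hyperplanes in the $W$-orbit and no residual dependence on how $\WH$ sits inside $W$ beyond the map $\WH\to\fS_h$. In \cite{fnv2021} one relaxes hyperplanes one at a time and the corrections simply add; here we relax a whole orbit simultaneously, so I need to verify that simultaneous relaxation of an orbit of stressed hyperplanes is "the same" as iterated single relaxation at the level of the equivariant recursion — in particular that relaxing one hyperplane in the orbit does not destroy the stressedness of the others, and that the intersection lattice changes in a way compatible with $W$-symmetry — so that the $W$-equivariant correction is literally $\bigoplus_{H' \in W\cdot H}(\text{local correction at } H')$ as a $\WH$-set summed up, i.e.\ $\Ind_{\WH}^W$ of one summand. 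Once that bookkeeping is pinned down, identifying the universal $\fS_h$-polynomials with explicit combinations of equivariant uniform-matroid polynomials (which exist by \cite{gpy-equi-kl, GXY21-inverse-equivariant}) is a routine unwinding that I would relegate to a remark or to the subsequent "explicit formulas" section.
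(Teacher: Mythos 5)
Your high-level picture—pick a universal $\fS_h$-polynomial, then argue that the correction for a general $(M,W,H)$ is $\Ind_{\WH}^W\Res_{\WH}^{\fS_h}$ of it—matches the shape of what the theorem asserts, but your proposed method for establishing the universality has a genuine gap, and it is not the route the paper takes.

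The crux is your claim that ``the portion of $P_{\tM}^W(t)-P_M^W(t)$ supported near the orbit of $H$ is, by the non-equivariant computation done $W$-equivariantly together with the orbit decomposition $W/\WH$, exactly $\Ind_{\WH}^W$ of the local contribution.'' There is no notion of a KL polynomial being ``supported'' near a hyperplane orbit; the only access one has to $P_M^W(t)$ is the defining recursion. Making your sentence precise means running that recursion and showing that the terms which change upon relaxation can be reorganized into an induction from $\WH$ — and that reorganization is exactly the nontrivial content. It is not a formal consequence of naturality, because the homomorphism $\WH\to\fS_h$ is in general neither injective nor surjective, and because when one contracts by a subset $S\subsetneq H$ the resulting matroid $M_S$ may have \emph{several} $\WS$-orbits of stressed hyperplanes that get relaxed, one for each $J\in\cH$ containing $S$; one then has to verify that the total correction for $M_S$ decomposes over those $\WS$-orbits in a way that dovetails with the $W$-orbit structure on pairs $(S,J)$. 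You correctly flag this ``interference'' issue as the main obstacle, but you do not resolve it; you only say you would ``verify'' it. The non-equivariant argument from \cite{fnv2021} relaxes one hyperplane at a time, and upgrading it to relaxing a whole $W$-orbit simultaneously is not a step-by-step ``every step is natural'' upgrade — it changes the inductive structure.

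The paper's actual proof is an induction on the rank $k$, organized around the auxiliary polynomial $R_M^W(t)=\sum_{[S]\neq[\emptyset]}t^{\rk S}\Ind_{\WS}^W P_{M_S}^{\WS}(t)$ from Remark~\ref{R}, which determines $P_M^W$ and $Z_M^W$ through the palindromicity of $Z$. It suffices to show $R_{\tM}^W(t)-R_M^W(t)$ is $\Ind_{\WH}^W\Res_{\WH}^{\fS_h}$ of a polynomial $r_{k,h}^{\fS_h}(t)$ depending only on $(k,h)$. This is done by splitting the sum over $[S]$ into: $S=E$ (vanishes); $S$ not contained in any hyperplane of the orbit (vanishes); $S=H$ (gives $-\Ind_{\WH}^W\Res_{\WH}^{\fS_h}\tau_{\fS_h}$ via Remark~\ref{loop}); and $\emptyset\subsetneq S\subsetneq H$, which is handled by the key technical Lemma~\ref{claim1} together with the push-pull identity $\Ind_{\WT}^W=\Ind_{\WH}^W\Ind_{\WT\cap\WH}^{\WH}$ and the ``standard fact about induced representations'' applied twice, once for $\WH$ and once for $\fS_h$. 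Lemma~\ref{claim1} is precisely the resolution of your interference worry: it reconciles the $\WS$-orbit decomposition of stressed hyperplanes in $M_S$ with the $\WH$-orbit decomposition of subsets of $H$, via the double-coset bijection $\WS\backslash L(S,H)/\WH$. This produces $r_{k,h}^{\fS_h}(t)=-\tau_{\fS_h}+\sum_{i=1}^{h-1}\Ind_{\fS_i\times\fS_{h-i}}^{\fS_h}(\tau_{\fS_i}\boxtimes p_{k-i,h-i}^{\fS_{h-i}}(t))$ and thereby $p_{k,h}^{\fS_h}$ recursively; the ``model matroid'' computation with $U_{k-1,h}\oplus B_1$ that you propose to use to \emph{define} $p_{k,h}^{\fS_h}$ is what the paper does only later, in the proof of Theorem~\ref{explicit}, to obtain the closed formula. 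In short: your proposal relies on a locality principle that is not a theorem, and it defers the real work (the orbit bookkeeping of Lemma~\ref{claim1} and the $R$-polynomial decomposition) to a ``routine unwinding'' that is in fact the entire proof.
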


Next, we give explicit formulas for two of the three $\fS_h$-equivariant polynomials appearing in Theorem \ref{main}.
Given a partition $\la$ of $h$, we write $V_\la$ to denote the corresponding irreducible representation of $\fS_h$ over the rational numbers,
which is called the {\bf Specht module} associated with $\la$.  More generally, given a pair of partitions $\la$ and $\mu$ 
with $|\la| - |\mu| = h$,
we write $V_{\la/\mu}$ to denote the corresponding {\bf skew Specht module}, which is characterized by the property that the multiplicity of $V_\nu$ in $V_{\la/\mu}$
is equal to the multiplicity of $V_\la$ in $$\Ind_{\fS_{|\mu|}\times\fS_h}^{\fS_{|\la|}} \!\Big(V_\mu \boxtimes V_\nu\Big).$$
For uniform matroids, the coefficients of equivariant inverse Kazhdan--Lusztig polynomials are Specht modules \cite[Theorem 3.2]{GXY21-inverse-equivariant},
while the coefficients of equivariant Kazhdan--Lusztig polynomials are skew Specht modules \cite[Theorem 3.7]{GXY21-inverse-equivariant}.

\begin{theorem}\label{explicit}
When $k=1$, we have $p_{1,h}^{\fS_h}(t) = V_{[h]} = q_{1,h}^{\fS_h}(t)$ and $z_{1,h}^{\fS_h}(t) = 0$.  When $k>1$, 
we have the following explicit formulas:\footnote{In the expression for $q_{k,h}^{\fS_h}(t)$, 
we interpret the first term to be zero if $i=0$, and we interpret the second term to be zero if $i>0$ and $k=h$.}
\beq
p_{k,h}^{\fS_h}(t) &=& \sum_{0< i < k/2} V_{[h-2i+1,(k-2i+1)^i]/[k-2i,(k-2i-1)^{i-1}]}\ t^i\\
q_{k,h}^{\fS_h}(t) &=& \sum_{0\leq i < k/2} \left(V_{[h-k+2,2^{i-1},1^{k-2i}]} + V_{[h-k+1,2^i,1^{k-2i-1}]} \right)\, t^i.
\eeq
\end{theorem}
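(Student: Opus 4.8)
The plan is to determine the polynomials of Theorem~\ref{main} by applying that theorem to a single well-chosen family of matroids for which both $M$ and $\tM$ have already-computed equivariant invariants. Fix $h\ge k\ge 1$ and take $M$ to be the matroid on $[h+1]$ whose bases are the $k$-element subsets of $[h+1]$ containing $h+1$; equivalently $M\cong U_{k-1,h}\oplus U_{1,1}$ with $U_{k-1,h}$ supported on $[h]$. Then $H:=[h]$ is the rank-$(k-1)$ flat of $M$, hence a hyperplane, and it is stressed because every $k$-subset of $[h]$ is dependent in $U_{k-1,h}$ yet becomes independent after deleting any one element. Relaxing $H$ adjoins all $k$-subsets of $[h]$ to the basis set, so $\tM=U_{k,h+1}$. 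Let $W=\fS_h$ act on $H=[h]$ in the standard way and fix $h+1$: then $W$ stabilizes $H$, so its orbit is $\{H\}$; also $\WH=W=\fS_h$ and the induced map $\WH\to\fS_H\cong\fS_h$ is the identity, so $\Ind_{\WH}^{W}\Res_{\WH}^{\fS_h}$ is the identity. Using the multiplicativity of these invariants under direct sums (so that adjoining the coloop $U_{1,1}$ changes neither $P^{\fS_h}$ nor $Q^{\fS_h}$) and their functoriality under restriction of groups, Theorem~\ref{main} reduces to
\[
p_{k,h}^{\fS_h}(t)=\Res^{\fS_{h+1}}_{\fS_h}P_{U_{k,h+1}}^{\fS_{h+1}}(t)-P_{U_{k-1,h}}^{\fS_h}(t),
\]
\[
q_{k,h}^{\fS_h}(t)=\Res^{\fS_{h+1}}_{\fS_h}Q_{U_{k,h+1}}^{\fS_{h+1}}(t)-Q_{U_{k-1,h}}^{\fS_h}(t).
\]
(When $k=1$ the matroid $U_{0,h}$ consists entirely of loops; this degenerate case I would verify directly, recovering $p_{1,h}^{\fS_h}=q_{1,h}^{\fS_h}=V_{[h]}$ and $z_{1,h}^{\fS_h}=0$. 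From now on assume $k\ge 2$, so every matroid in sight is loopless.)

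The second step is to feed in the known uniform-matroid formulas. By \cite[Theorem 3.7]{GXY21-inverse-equivariant} the coefficients of $P_{U_{k,n}}^{\fS_n}(t)$ are explicit skew Specht modules, and by \cite[Theorem 3.2]{GXY21-inverse-equivariant} the coefficients of $Q_{U_{k,n}}^{\fS_n}(t)$ are explicit Specht modules. On a Specht module, $\Res^{\fS_m}_{\fS_{m-1}}$ acts by the classical branching rule $\Res^{\fS_m}_{\fS_{m-1}}V_\la=\bigoplus_{\la^-}V_{\la^-}$, the sum over partitions $\la^-$ obtained from $\la$ by deleting a corner box; on a skew Specht module it acts by $\Res^{\fS_m}_{\fS_{m-1}}V_{\la/\nu}=\bigoplus_{\la^-\supseteq\nu}V_{\la^-/\nu}$. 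Equivalently, passing to symmetric functions via the Frobenius characteristic, restriction becomes the derivation $\partial/\partial p_1$ and skew Specht modules become skew Schur functions $s_{\la/\nu}$. I would then expand $\Res^{\fS_{h+1}}_{\fS_h}P_{U_{k,h+1}}^{\fS_{h+1}}$ and $\Res^{\fS_{h+1}}_{\fS_h}Q_{U_{k,h+1}}^{\fS_{h+1}}$ with these rules, cancel the terms of $P_{U_{k-1,h}}^{\fS_h}$ and $Q_{U_{k-1,h}}^{\fS_h}$, and check that what survives is exactly the sum over $0<i<k/2$ (respectively $0\le i<k/2$) in the statement. The footnote conventions for $q$ --- the first summand is zero when $i=0$, and the second is zero when $i>0$ and $k=h$ --- record exactly the shapes $[h-k+2,2^{i-1},1^{k-2i}]$ and $[h-k+1,2^i,1^{k-2i-1}]$ that are ill-defined or fail to be partitions for those parameters. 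A useful running check is that taking dimensions must return the non-equivariant formulas for $p_{k,h}(t)$ and $q_{k,h}(t)$ established in \cite{fnv2021}.

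The genuinely nontrivial part is this last reconciliation. For $Q$ it should be close to routine: the coefficients are single Specht modules (two per degree), branching is corner-box removal, and the cancellation against $Q_{U_{k-1,h}}^{\fS_h}$ is bookkeeping among partitions of the shape $[a,2^{j},1^{m}]$. The $P$-statement is where I expect the real work: the skew shapes appearing in \cite[Theorem 3.7]{GXY21-inverse-equivariant} are intricate, so tracking which skew Specht modules survive restriction-and-subtract and identifying the survivor with $V_{[h-2i+1,(k-2i+1)^i]/[k-2i,(k-2i-1)^{i-1}]}$ is the main obstacle. I would handle it by working entirely with skew Schur functions and verifying the resulting identity via Pieri's rule together with the Jacobi--Trudi determinant $s_{\la/\nu}=\det\big(h_{\la_i-\nu_j-i+j}\big)$, under which $\partial/\partial p_1$ lowers one index per row, so that the telescoping becomes transparent at the level of the determinant expansions.
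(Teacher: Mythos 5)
Your strategy coincides with the paper's: you take $M=U_{k-1,h}\oplus U_{1,1}$ so that $\tM=U_{k,h+1}$, apply Theorem~\ref{main} with $W=\WH=\fS_h$ to get
$p_{k,h}^{\fS_h}(t)=\Res^{\fS_{h+1}}_{\fS_h}P_{U_{k,h+1}}^{\fS_{h+1}}(t)-P_{U_{k-1,h}}^{\fS_h}(t)$ and the analogous identity for $q$, and then feed in the uniform-matroid formulas of \cite[Theorems 3.2, 3.7]{GXY21-inverse-equivariant}. The one substantive point of divergence is the branching rule for skew Specht modules: you use $\Res^{\fS_{h+1}}_{\fS_h}V_{\la/\mu}=\bigoplus_{\la^-\supseteq\mu}V_{\la^-/\mu}$ (delete a box from the outer shape), whereas the paper uses the equally valid $\bigoplus_{\mu^+}V_{\la/\mu^+}$ (add a box to the inner shape), proved by Frobenius reciprocity and Pieri. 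This is not a cosmetic choice. With the paper's form, one of the two resulting skew summands, $V_{[h-2i+1,(k-2i+1)^i]/[(k-2i-1)^i,1]}$, is literally a horizontal translate of the subtracted term $V_{[h-2i,(k-2i)^i]/[(k-2i-2)^i]}$ from $P_{U_{k-1,h}}^{\fS_h}$, so the cancellation is term-by-term and the surviving summand is already in the desired form. With your form of the rule, the decomposition you get does not line up term-by-term against $P_{U_{k-1,h}}^{\fS_h}$, and the ``reconciliation'' you defer to Pieri/Jacobi--Trudi is a genuine skew-Schur identity that still needs to be proved; you flag this as the hard step but do not carry it out. For $q$ the coefficients are honest Specht modules, so your corner-removal branching and the bookkeeping among shapes $[a,2^j,1^m]$ would go through as you expect. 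So: right matroid, right reduction, right inputs; the only gap is the unexecuted final identification for the $p$-polynomial, and choosing the ``add a box to $\mu$'' form of skew branching would collapse that step to a single observation about translated skew diagrams.
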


\begin{remark}
One can use similar methods to obtain an explicit formula for $z_{k,h}^{\fS_h}(t)$, but since this formula 
is considerably less elegant, we omit it.
\end{remark}

An unpublished conjecture of Gedeon states that the coefficients of the equivariant Kazhdan--Lusztig polynomial
of a matroid $M$ are bounded above by the coefficients of the equivariant Kazhdan--Lusztig polynomial of the uniform
matroid of the same rank on the same ground set.  We give a precise statement of this conjecture here; the non-equivariant version of the conjecture appears in
\cite[Conjecture 1.1]{lee-nasr-radcliffe-sparse}.

\begin{conjecture}\label{gedeon}
Let $M$ be a matroid of rank $k$ on the ground set $E$, and let $W$ be a finite group that acts on $E$ preserving $M$.
Then the coefficients of $P_{U_{k,E}}^W(t) - P_M^W(t)$ are honest (rather than virtual) representations of $W$.\footnote{We write $U_{k,E}$ to denote the uniform matroid
of rank $k$ on the set $E$, and $U_{k,n}$ to denote the uniform matroid of rank $k$ on the set $[n]$.  This differs from the notation in some of the references, where
$U_{m,d}$ is used to denote the uniform matroid of rank $d$ on the set $[m+d]$.}
\end{conjecture}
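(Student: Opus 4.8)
The machinery of this paper settles the conjecture for paving matroids, so I would treat that case first and then isolate what is missing in general. Assume $M$ is paving of rank $k$ on $E$. Since a matroid is paving exactly when it can be turned into a uniform matroid by a sequence of relaxations of stressed hyperplanes, and since the $W$-action permutes stressed hyperplanes and hence permutes such relaxation sequences, one can build a chain $M = M_0, M_1, \dots, M_r = U_{k,E}$ in which each $M_{j+1} = \widetilde{M_j}$ is obtained from $M_j$ by simultaneously relaxing a single $W$-orbit of stressed hyperplanes, of common cardinality $h_j$ and with stabilizer $W_{H_j}$. (At each stage the hyperplanes in the chosen orbit pairwise meet in rank at most $k-2$, so relaxing them one at a time or all at once gives the same matroid, and the $W$-action descends to $M_{j+1}$.) Applying the first identity of Theorem \ref{main} along this chain and telescoping gives
\[
P_{U_{k,E}}^{W}(t) - P_{M}^{W}(t) \;=\; \sum_{j=0}^{r-1} \Ind_{W_{H_j}}^{W}\Res_{W_{H_j}}^{\fS_{h_j}} p_{k,h_j}^{\fS_{h_j}}(t),
\]
so it suffices to prove that the right-hand side has honest coefficients.

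Restriction along a group homomorphism and induction along a subgroup both carry honest rational representations to honest rational representations, and sums of honest representations are honest; hence the displayed sum has honest coefficients as soon as each $p_{k,h}^{\fS_h}(t)$ does. But Theorem \ref{explicit} makes this transparent: for $k = 1$ the polynomial is $V_{[h]}$, the trivial representation, and for $k > 1$ every nonzero coefficient is a single skew Specht module $V_{\lambda/\mu}$, which is an honest $\fS_h$-representation because its irreducible multiplicities are Littlewood--Richardson coefficients (equivalently, because the skew Specht module is a genuine module, not a virtual one). This proves Conjecture \ref{gedeon} for paving matroids, and upon taking dimensions recovers \cite[Conjecture 1.1]{lee-nasr-radcliffe-sparse} in the paving case.

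The essential difficulty lies entirely in the general case. A matroid that can be reduced to a uniform matroid by relaxations is necessarily paving, so no telescoping argument of the above shape can reach an arbitrary matroid, and even the non-equivariant statement appears to be open beyond paving matroids. Proving the conjecture in general would seem to require one of two things: a deletion--contraction type recursion for $P_M^W(t)$ that is visibly compatible with the bound (the known recursions for matroid Kazhdan--Lusztig polynomials, built from the lattice of flats, do not obviously have this property), or a direct $W$-equivariant construction --- geometric, combinatorial, or representation-theoretic --- exhibiting the coefficients of $P_{U_{k,E}}^W(t) - P_M^W(t)$ as honest representations, for instance via an equivariant filtration or surjection. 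Finding such a construction uniformly over all matroids is the main obstacle; a more modest intermediate goal would be to identify a larger repertoire of ``relaxation-type'' moves (e.g.\ relaxations of stressed subsets of other ranks) connecting further matroids to uniform ones, after which the argument above would apply verbatim.
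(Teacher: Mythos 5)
Your proposal is correct, and your argument for the paving case is essentially the paper's proof of the Corollary: telescope the first identity of Theorem~\ref{main} along a sequence of $W$-orbit relaxations taking $M$ to $U_{k,E}$, then observe via Theorem~\ref{explicit} that each correction term $p_{k,h}^{\fS_h}(t)$ has coefficients that are genuine (skew) Specht modules, hence remain honest after restriction and induction. You also correctly identify that the statement is only a conjecture in general and that no relaxation-telescoping argument can reach non-paving matroids, which is exactly the state of affairs the paper records.
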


\begin{remark}
The fact that the constant and linear terms of $P_{U_{k,E}}^W(t) - P_M^W(t)$ are honest representations
follows from \cite[Corollary 2.10]{gpy-equi-kl}.  In higher degrees, the conjecture remains open.
\end{remark}

Theorems \ref{main} and \ref{explicit} imply that Conjecture \ref{gedeon} holds for paving matroids.

\begin{corollary}
Conjecture \ref{gedeon} holds when $M$ is paving.
\end{corollary}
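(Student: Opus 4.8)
The plan is to reduce Conjecture \ref{gedeon} for paving matroids to the positivity of the $\fS_h$-equivariant polynomial $p_{k,h}^{\fS_h}(t)$ of Theorem \ref{explicit}, and then to read that positivity off the explicit formula. So let $M$ be a paving matroid of rank $k$ on a ground set $E$, with $W$ a finite group acting on $E$ and preserving $M$.

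First I would record the relevant combinatorics of stressed hyperplanes in a paving matroid. Since every $(k-1)$-element subset is independent, every flat of rank at most $k-2$ has at most $k-2$ elements, and so any two distinct hyperplanes of size at least $k$ meet in fewer than $k$ points; moreover, a hyperplane of size at least $k$ is automatically stressed, because each of its $k$-element subsets is dependent and hence a circuit. Using this, I would show that the ($W$-stable) collection of stressed hyperplanes of $M$ can be relaxed one $W$-orbit at a time: relaxing the orbit of a stressed hyperplane $H$ turns no $k$-subset of a different stressed hyperplane $H'$ into a basis (as $|H\cap H'|<k$) and leaves $H'$ a flat of rank $k-1$, so $H'$ remains a stressed hyperplane of the relaxed matroid, which is again paving of rank $k$ with a compatible $W$-action. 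Each such relaxation strictly increases the number of bases, and a paving matroid with no stressed hyperplane is uniform, so after finitely many steps we reach $U_{k,E}$ (each dependent $k$-subset of $M$ lies in its closure, a stressed hyperplane, hence becomes a basis). Applying Theorem \ref{main} at each step, with $H_1,\dots,H_r$ a system of representatives for the $W$-orbits of stressed hyperplanes of $M$, $h_j=|H_j|$, and $W_j$ the stabilizer of $H_j$ in $W$, this telescopes to
\[
P^W_{U_{k,E}}(t) - P^W_M(t) \;=\; \sum_{j=1}^{r} \Ind_{W_j}^{W}\Res_{W_j}^{\fS_{h_j}} p_{k,h_j}^{\fS_{h_j}}(t).
\]

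It then remains to see that the right-hand side is, degree by degree, an honest representation of $W$. Pullback along the homomorphism $W_j\to\fS_{h_j}$ preserves honesty, $\Ind_{W_j}^{W}$ preserves honesty, and a degreewise sum of honest representations is honest, so we are reduced to showing that each $p_{k,h}^{\fS_h}(t)$ has honest coefficients. For $k=1$ this is immediate from Theorem \ref{explicit}, where $p_{1,h}^{\fS_h}(t)=V_{[h]}$ is the trivial representation. For $k>1$, each coefficient of $p_{k,h}^{\fS_h}(t)$ is a single skew Specht module $V_{\lambda/\mu}$; by the definition recalled before Theorem \ref{explicit}, the multiplicity of $V_\nu$ in $V_{\lambda/\mu}$ is a Littlewood--Richardson coefficient, hence non-negative, so $V_{\lambda/\mu}$ is an honest representation. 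This finishes the proof.

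The main obstacle is entirely contained in the first part: one must check that the stressed hyperplanes of a paving matroid can be simultaneously relaxed, $W$-orbit by $W$-orbit, that the intermediate matroids stay paving of rank $k$ with compatible $W$-actions, and that the procedure terminates at $U_{k,E}$. This is the equivariant refinement of the relaxation description of paving matroids from \cite{fnv2021}; the only extra care needed is that distinct stressed hyperplanes are far apart (meeting in fewer than $k$ points), so that relaxing one $W$-orbit does not spoil another, and that each intermediate relaxation has exactly the shape required by Theorem \ref{main}. After that the argument is formal, and positivity of $p_{k,h}^{\fS_h}(t)$ is immediate from the Littlewood--Richardson expansion of skew Specht modules.
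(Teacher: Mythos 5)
Your proof is correct and takes the same route the paper does: transform $M$ into $U_{k,E}$ by relaxing $W$-orbits of stressed hyperplanes, apply Theorem \ref{main} to telescope the difference, and read off the honesty of the resulting coefficients from the explicit skew Specht module formula in Theorem \ref{explicit}. You have usefully filled in the combinatorial details (intermediate matroids remain paving, distinct stressed hyperplanes meet in fewer than $k$ points, so the orbits can be relaxed one at a time without spoiling the others) that the paper's two-sentence proof leaves implicit.
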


\begin{proof}
If $M$ is paving, then $M$ may be transformed into $U_{k,E}$ by relaxing finitely many $W$-orbits of stressed hyperplanes.
Theorems \ref{main} and \ref{explicit} imply that each of these relaxations changes the equivariant Kazhdan--Lusztig polynomial
by adding a correction term whose coefficients are honest representations.
\end{proof}

Theorems \ref{main} and \ref{explicit}, along with the known formulas for uniform matroids, 
provide us with the tools to compute our equivariant polynomials for any paving matroid and any group of symmetries.
To illustrate this, we apply our results to compute the equivariant Kazhdan--Lusztig polynomials in six specific examples.  
First, we consider the V\'amos matroid, a sparse paving matroid of rank 4 with symmetry group isomorphic
to $D_4\times D_4$.  Our last five examples involve the Mathieu groups $M_{11}$, $M_{12}$, $M_{22}$, $M_{23}$, and $M_{24}$, which are sporadic finite simple groups.
Each of the Mathieu groups can be realized as a group of symmetries of a Steiner system, and therefore also of the paving matroid associated with that Steiner system.
The SageMath \cite{sagemath} code used to study each of these six examples is available at $$\textsf{https://github.com/trevorkarn/equivariant-matroid-relaxation.}$$

\vspace{\baselineskip}
\noindent
{\em Acknowledgments:}
The authors are grateful to June Huh suggesting the study of matroids associated with Steiner systems, and to Tom Braden and
Luis Ferroni for valuable discussions.

\section{Defining the polynomials}\label{sec:defs}
For any subset $S\subset E$, we write $M^S$ to denote the matroid obtained by localizing at $S$ (equivalently deleting the complement of $S$), and we 
write $M_S$ to denote the matroid obtained by contracting $S$.  Both of these matroids admit actions of the stabilizer group $\WS\subset W$. 
We denote the trivial representation of $W$ by $\tau_W$.

The equivariant Kazhdan--Lusztig polynomial $P_M^W(t)$ and the equivariant $Z$-polynomial $Z_M^W(t)$ 
are characterized by the following conditions:
\begin{itemize}
\item If the ground set of $M$ is empty, then $P_M^W(t) = Z_M^W(t) = \tau_W$.
\item If the ground set of $M$ is nonempty, then the degree of $P_M^W(t)$ is strictly smaller than half of the rank of $M$.
\item The polynomial $Z_M^W(t)$ is palindromic, with degree equal to the rank of $M$:
$$t^{\rk M} Z_M^W(t^{-1}) = Z_M^W(t).$$
\item For all $M$, \begin{equation}\label{PZ}Z_M^W(t)\; \; = \sum_{[S] \in 2^E/W} t^{\rk S} \Ind_{\WS}^S P_{M_S}^{\WS}(t).\end{equation}
\end{itemize}

\begin{remark}\label{R}
To be more explicit about how this works, let $M$ be a matroid of rank $k$ on a nonempty ground set $E$, and 
assume that equivariant Kazhdan--Lusztig polynomials have been defined for all matroids whose
ground sets are proper subsets of $E$.  Let
$$R_M^W(t) := \sum_{[S] \in (2^E\smallsetminus\{\emptyset\})/W} t^{\rk S} \Ind_{\WS}^S P_{M_S}^{\WS}(t).$$
Then $P_M^W(t)$ is the unique polynomial of degree strictly less than $k/2$ with the property that $Z_M^W(t) := P_M^W(t) + R_M^W(t)$ is palindromic of degree $k$.
\end{remark}

The equivariant inverse Kazhdan--Lusztig polynomial is characterized by the following two conditions \cite[Proposition 4.6]{proudfootkls}:
\begin{itemize}
\item If the ground set of $M$ is empty, then $Q_M^W(t) = \tau_W$.
\item If the ground set of $M$ is nonempty, then \begin{equation}\label{Q}\sum_{[S] \in 2^E/W} (-1)^{\rk S} \Ind_{\WS}^W\left(Q_{M^S}^{\WS}(t) \otimes P_{M_S}^{\WS}(t)\right) = 0.\end{equation}
\end{itemize}

\begin{remark}\label{loop}
The original definition of the (ordinary or equivariant) Kazhdan--Lusztig polynomial of $M$ and inverse Kazhdan--Lusztig polynomial of $M$ applied only to loopless matroids.
With this definition, one can prove inductively that $P_M^W(t)=0=Q_M^W(t)$ whenever $M$ has a loop.
In contrast, the polynomial $Z_M^W(t)$ are unchanged when we replace $M$ with its simplification.
These are the most natural definitions from the geometric point of view.
\end{remark}

\begin{remark}\label{subsets}
The contraction $M_S$ is loopless if and only if $S$ is a flat,
so Remark \ref{loop} implies that we may replace the sums in equations \eqref{PZ} and \eqref{Q} with sums over $W$ orbits in the lattice of flats of $M$.
However, it will be more convenient for our purposes to work with the sum over all subsets.
\end{remark}

\section{The first theorem}
This section is devoted to the proof of Theorem \ref{main}.  We begin with a technical lemma that will be a crucial ingredient in the proof.

Let $S$ be a nonempty proper subset of $H$.  Let $L(S,H) := \{w\in W\mid wS \subset H\}$.  This set admits an action by $\WH$ via left multiplication,
as well as a commuting action of $\WS$ via right multiplication.  The quotients by these actions can be described as follows:
\beq
C(S,H) &:=& \{wS\mid wS\subset H\} \cong L(S,H)/\WS\\
D(S,H) &:=& \{J\in \cH\mid S\subset J\}\cong \WH \backslash L(S,H).
\eeq
The double quotient $$\WH\backslash C(S,H) \cong \WS \backslash L(S,H)/\WH\cong D(S,H)/\WS$$ may be identified with the set of 
$\WS$-orbits of stressed hyperplanes that one must relax to go from $M_S$ to $\tM_S$.  Let $\widetilde{M_S}$ be the matroid obtained by relaxing only one of those orbits, namely the one containing the stressed hyperplane $H\smallsetminus S$. 

\begin{lemma}\label{claim1}
Suppose that Theorem \ref{main} holds for the matroids of rank equal to the rank of $M_S$.  Then we have
$$\Ind_{\WS}^W\left(P_{\tM_S}^{\WS}(t) -  P_{M_S}^{\WS}(t)\right)
= \sum_{[T]\in \WH\backslash C(S,H)}\Ind_{\WT}^W\left(P_{\widetilde{M_T}}^{\WT}(t) -  P_{M_T}^{\WT}(t)\right).$$
\end{lemma}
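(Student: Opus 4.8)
The plan is to expand each side of the identity by invoking Theorem~\ref{main} for matroids of rank $\rk M_S$ (which we are assuming) on \emph{single}-orbit relaxations, and then to check that the two expansions agree term by term, using the $(\WH,\WS)$-biset $L(S,H)$ to match the terms.

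For the left-hand side: as recorded in the discussion preceding the lemma, $\tM_S=(\tilde M)_S$ is obtained from $M_S$ by relaxing, one $\WS$-orbit at a time, the stressed hyperplanes $\{J\smallsetminus S\mid J\in D(S,H)\}$, which fall into the $|D(S,H)/\WS|$ orbits indexed by $D(S,H)/\WS$; each such hyperplane has cardinality $|J|-|S|=h-|S|$ since $S\subset J$ and $|J|=|H|=h$, and, again because $S\subset J$, its stabilizer in $\WS$ is $\WS\cap W_{\!J}$. Granting — this is the one genuinely matroid-theoretic point, discussed below — that relaxation preserves rank and does not disturb the not-yet-relaxed orbits, Theorem~\ref{main} applies at each step, and iterating it together with transitivity of induction gives
\[
\Ind_{\WS}^W\!\big(P_{\tM_S}^{\WS}(t)-P_{M_S}^{\WS}(t)\big)=\sum_{[J]\in D(S,H)/\WS}\Ind_{\WS\cap W_{\!J}}^{W}\Res_{\WS\cap W_{\!J}}^{\fS_{h-|S|}}p_{\rk M_S,\,h-|S|}^{\fS_{h-|S|}}(t),
\]
where $\WS\cap W_{\!J}$ maps to $\fS_{h-|S|}$ via its action on $J\smallsetminus S$. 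For the right-hand side: for each $T=wS\in C(S,H)$ the set $H\smallsetminus T$ is a stressed hyperplane of $M_T$ of cardinality $h-|S|$ whose stabilizer in $\WT$ is $\WT\cap\WH$, so a single application of Theorem~\ref{main} to the relaxation of $M_T$ to $\widetilde{M_T}$, followed by $\Ind_{\WT}^W$, rewrites the right-hand side as $\sum_{[T]\in\WH\backslash C(S,H)}\Ind_{\WT\cap\WH}^{W}\Res_{\WT\cap\WH}^{\fS_{h-|S|}}p_{\rk M_S,\,h-|S|}^{\fS_{h-|S|}}(t)$.

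It remains to identify the two sums under the bijection $\WH\backslash C(S,H)\cong D(S,H)/\WS$ from the discussion above, which sends the class of $T=wS$ (for $w\in L(S,H)$) to the class of $J=w^{-1}H$. For such a matched pair we have $\WS\cap W_{\!J}=\WS\cap w^{-1}\WH w$ while $\WT\cap\WH=w\WS w^{-1}\cap\WH$, so conjugation by $w$ is an isomorphism $\WS\cap W_{\!J}\xrightarrow{\sim}\WT\cap\WH$; moreover the bijection $w\colon J\smallsetminus S\to H\smallsetminus T$ intertwines the actions of these two groups along that conjugation, so the two resulting homomorphisms into $\fS_{h-|S|}$ differ only by a relabeling of $\{1,\dots,h-|S|\}$, i.e.\ by an inner automorphism. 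Hence the restrictions of $p_{\rk M_S,\,h-|S|}^{\fS_{h-|S|}}(t)$ to $\WS\cap W_{\!J}$ and to $\WT\cap\WH$ correspond under the conjugation isomorphism, and since induction to $W$ is unaffected by replacing a subgroup and representation by a conjugate pair, the matched summands are equal; summing over $\WH\backslash C(S,H)$ finishes the proof.

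The representation theory above is routine once everything is aligned; the substantive point is the matroid bookkeeping behind the left-hand expansion, namely that $(\tilde M)_S$ really is the iterated relaxation of $M_S$ along the orbits indexed by $D(S,H)/\WS$ and that relaxing one such orbit leaves the others intact as families of stressed hyperplanes of cardinality $h-|S|$, so Theorem~\ref{main} can be reapplied. I would handle this either by citing the compatibility of contraction with relaxation of stressed hyperplanes from \cite{fnv2021}, or directly: since every $J$ in the $W$-orbit of the stressed hyperplane $H$ has $M|J$ uniform, distinct such $J,J'$ satisfy $|J\cap J'|\le k-2$; as $S\subset J\cap J'$ this gives $|(J\cap J')\smallsetminus S|\le k-2-|S|<\rk M_S$, so relaxing the orbit of $J'\smallsetminus S$ (which only adjoins $(\rk M_S)$-subsets of the relaxed hyperplanes to the independent sets) cannot make any $(\rk M_S)$-subset of $J\smallsetminus S$ independent, and an easy induction on the number of relaxations shows that each unrelaxed $J\smallsetminus S$ remains a stressed hyperplane of cardinality $h-|S|$ with uniform restriction.
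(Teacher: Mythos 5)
Your proof is correct and follows essentially the same strategy as the paper's: expand each side by invoking the inductive hypothesis (Theorem \ref{main} applied to the lower-rank matroids $M_S$ and $M_T$) and match the resulting sums under the bijection $\WH\backslash C(S,H)\cong D(S,H)/\WS$. The paper's own proof is quite terse; it simply records the two expansions and asserts that the identification of index sets finishes the argument, without spelling out the conjugation isomorphism $\WS\cap W_{\!J}\cong\WT\cap\WH$ that actually matches the corresponding summands, and without addressing the point that the passage from $M_S$ to $\tM_S$ is an \emph{iterated} relaxation whose validity requires knowing that relaxing one $\WS$-orbit leaves the remaining hyperplanes $J\smallsetminus S$ stressed. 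You fill in both of these gaps explicitly; your argument that $|J\cap J'|\le k-2$ for distinct hyperplanes in the orbit (hence that one relaxation cannot disturb another) is correct and is the kind of detail the paper leaves implicit. So your write-up is a more careful version of the same proof rather than a different one.
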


\begin{proof}
Let $i = |S|$.  Theorem \ref{main} for the action of $\WS$ on $M_S$ tells us that 
$$P_{\tM_S}^{\WS}(t) -  P_{M_S}^{\WS}(t) = \sum_{[J]\in D(S,H)/\WS} \Ind_{\WJ\cap\WS}^{\WJ}\Res^{\fS_{h-i}}_{\WJ\cap\WS} p_{k-i,h-i}^{\fS_{h-i}}(t).$$
Theorem \ref{main} for the action of $\WT$ on $M_T$ tells us that 
$$P_{\widetilde{M_T}}^{\WT}(t) -  P_{M_T}^{\WT}(t) = \Ind_{\WH\cap\WT}^{\WH}\Res^{\fS_{h-i}}_{\WH\cap\WT} p_{k-i,h-i}^{\fS_{h-i}}(t).$$
The lemma now follows from the identification of $D(S,H)/\WS$ with $\WH\backslash C(S,H)$.
\end{proof}

\begin{proof}[Proof of Theorem \ref{main}.]
We proceed by induction on $k$.  If $k=1$, then $H$ is necessarily the set of all loops in $M$, and $\WH=W$.
In this case, Remark \ref{loop} implies that we can take $p_{1,h}^{\fS_h}(t) = V_{[h]} = q_{1,h}^{\fS_h}(t)$ and $z_{1,h}^{\fS_h}(t) = 0$.

For the induction step, we will prove only the statements about $P_{\tM}^W(t)$ and $Z_{\tM}^W(t)$; the proofs of the statement about $Q_{\tM}^W(t)$ is 
nearly identical.  By Remark \ref{R}, it will be sufficient to prove that there is a polynomial $r_{k,h}^{\fS_h}(t)$ such that
$$R_{\tM}^W(t) = R_{M}^W(t) + \Ind_{\WH}^{W}\Res_{\WH}^{\fS_h}r_{k,h}^{\fS_h}(t);$$
the polynomials $p_{k,h}^{\fS_h}(t)$ and $z_{k,h}^{\fS_h}(t)$ can be obtained from $r_{k,h}^{\fS_h}(t)$ in the same way that we obtain $P_M^W(t)$ and $Z_M^W(t)$ from $R_M^W(t)$.
Assume $k>1$, and consider the difference 
$$R_{\tM}^W(t) - R_M^W(t) 
= \sum_{[S] \in (2^E\smallsetminus\{\emptyset\})/W} t^{\rk S} \Ind_{\WS}^W \left(P_{\tM_S}^{\WS}(t)-P_{M_S}^{\WS}(t)\right).$$
We break the sum into four different parts and analyze each part individually.
\begin{itemize}
\item We have $\tM_E = M_E$, so the summand indexed by $[E]$ vanishes.
\item Suppose $S$ is a proper subset of $E$ that is not contained in any element of $\cH$.  In this case, $\tM_S = M_S$, so the
summand indexed by $[S]$ vanishes.
\item The set $H$ is a flat of $M$ but not of $\tM$, and therefore $P_{\tM_H}^{\WH}(t) = 0$ by Remark \ref{loop}.
The contraction $M_H$ is uniform of rank 1, so $P_{M_H}^{\WH}(t) = \tau_{\WH}$.
Thus the summand indexed by $[H]$ is equal to 
$$- \Ind_{\WH}^W\tau_{\WH} = -\Ind_{\WH}^W\Res^{\fS_h}_{\WH} \tau_{\fS_h}.$$
\item Suppose that $\emptyset \subsetneq S \subsetneq H$.  
Our inductive hypothesis and Lemma \ref{claim1} tell us that the contribution indexed by $[S]$ is equal to 
$$t^{\rk S} \sum_{[T]\in \WH\backslash C(S,H)}\Ind_{\WT}^W\left( P_{\widetilde{M_T}}^{\WT}(t) - P_{M_T}^W(t)\right).$$
If we take the sum over all such $[S]$, we get
$$t^{\rk S}  \sum_{[T]\in (2^H\smallsetminus\{\emptyset,H\})/\WH}\Ind_{\WT}^W\left( P_{\widetilde{M_T}}^{\WT}(t) - P_{M_T}^W(t)\right).$$
Our inductive 
hypothesis tells us that
$$P_{\widetilde{M_T}}^{\WT}(t) - P_{M_T}^{\WT}(t) = \Ind^{\WT}_{\WT\cap\WH}\Res_{\WT\cap\WH}^{\fS_{h-|T|}} p_{k-|T|,h-|T|}^{\fS_{h-|T|}}(t),$$
and therefore that
\beq  \Ind_{\WT}^W\left( P_{\widetilde{M_T}}^{\WT}(t) - P_{M_T}^{\WT}(t)\right)
&=& \Ind_{\WT}^W \Ind^{\WT}_{\WT\cap\WH}\Res_{\WT\cap\WH}^{\fS_{h-|T|}} p_{k-|T|,h-|T|}^{\fS_{h-|T|}}(t)\\
&=& \Ind_{\WT\cap\WH}^W \Res_{\WT\cap\WH}^{\fS_{h-|T|}} p_{k-|T|,h-|T|}^{\fS_{h-|T|}}(t)\\
&=& \Ind_{\WH}^W\Ind_{\WT\cap\WH}^{\WH}\Res_{\WT\cap\WH}^{\fS_{h-|T|}} p_{k-|T|,h-|T|}^{\fS_{h-|T|}}(t).\eeq
Taking the sum over all
$[T]\in (2^H\smallsetminus\{\emptyset,H\})/\WH$, we get
\beq &&  \sum_{[T]\in (2^H\smallsetminus\{\emptyset,H\})/\WH}\Ind_{\WH}^W\Ind_{\WT\cap\WH}^{\WH}\Res_{\WT\cap\WH}^{\fS_{h-|T|}} p_{k-|T|,h-|T|}^{\fS_{h-|T|}}(t)\\
&=& \Ind_{\WH}^W\left( \sum_{[T]\in (2^H\smallsetminus\{\emptyset,H\})/\WH}\Ind_{\WT\cap\WH}^{\WH} \Res_{\WT\cap\WH}^{\fS_{h-|T|}} p_{k-|T|,h-|T|}^{\fS_{h-|T|}}(t)\right)\\
&=& \Ind_{\WH}^W\left( \sum_{\emptyset\subsetneq T\subsetneq H} \Res_{\WT\cap\WH}^{\fS_{h-|T|}} p_{k-|T|,h-|T|}^{\fS_{h-|T|}}(t)\right),
\eeq
where the second equality is a standard fact about induced representations; see for example \cite[lemma 2.7]{proudfootkls}.
(Note that the individual terms in the sum are not representations of $\WH$, but rather of $\WH\cap \WT$.  An element $w\in \WH$ takes the term indexed by $T$ to the term indexed by $wT$.)
We may rewrite this expression as
$$\Ind_{\WH}^W\Res^{\fS_h}_{\WH}\left( \sum_{\emptyset\subsetneq T\subsetneq [h]} p_{k-|T|,h-|T|}^{\fS_{h-|T|}}(t)\right),$$
where now the individual terms in the sum are representations of $\fS_{[h]\smallsetminus T}\cong \fS_{h-|T|}$, and the entire sum is a representation of $\fS_h$.
Finally, we once again employ the same standard fact about
induced representations, this time using the action of $\fS_h$, to rewrite our expression as
$$\Ind_{\WH}^W\Res^{\fS_h}_{\WH}\sum_{i=1}^{h-1}\Ind_{\fS_i\times\fS_{h-i}}^{\fS_h}\left(\tau_{\fS_i}\boxtimes 
p_{k-i,h-i}^{\fS_{h-i}}(t)\right),$$ which is manifestly of the desired form.
\end{itemize}
Putting the four parts together, we may take $$r_{k,h}^{\fS_h}(t) = - \tau_{\fS_h} + \sum_{i=1}^{h-1}\Ind_{\fS_i\times\fS_{h-i}}^{\fS_h}\left(\tau_{\fS_i}\boxtimes 
p_{k-i,h-i}^{\fS_{h-i}}(t)\right).$$
This completes the proof.
\end{proof}

\section{The second theorem}
This section is devoted to the proof of Theorem \ref{explicit}.  
The $k=1$ case was treated as the base case of the induction in the proof of Theorem \ref{main}, 
so we may assume that $k>1$.
Following the strategy of \cite{fnv2021}, we prove this result by examining a single example.
Let $$M_{k,h} := U_{k-1,h} \oplus B_1$$
be the direct sum of the uniform matroid of rank $k-1$ on $h$ elements and the Boolean matroid of rank 1.
The group $\fS_h$ acts on the first summand, which is a stressed hyperplane of cardinality $h$ \cite[Proposition 3.11]{fnv2021}.
The relaxation $\tM_{k,h}$ is equal to $U_{k,h+1}$.
We have the equalities
\beq p_{k,h}^{\fS_h}(t) &=& P_{\tM_{k,h}}^{\fS_h}(t) - P_{M_{k,h}}^{\fS_h}(t)\\
q_{k,h}^{\fS_h}(t) &=& Q_{\tM_{k,h}}^{\fS_h}(t) - Q_{M_{k,h}}^{\fS_h}(t),
\eeq
so it will suffice to compute the four polynomials on the right-hand sides of the two equations.

We begin with the polynomials associated with the matroid $M_{k,h}$.
We have
$$P_{B_1}^{\fS_h}(t) = Q_{B_1}^{\fS_h}(t) = V_{[h]},$$ 
and each of our three polynomials is multiplicative with respect to direct sums.
By \cite[Theorem 3.7]{GXY21-inverse-equivariant}, we have
\begin{equation}\label{pm}P_{M_{k,h}}^{\fS_h}(t) = P_{U_{k-1,h}}^{\fS_h}(t) = \sum_{i < (k-1)/2} V_{[h-2i,(k-2i)^i]/[(k-2i-2)^i]}\ t^i.\end{equation}
By \cite[Theorem 3.2]{GXY21-inverse-equivariant}, we have
\begin{equation}\label{qm}Q_{M_{k,h}}^{\fS_h}(t) = Q_{U_{k-1,h}}^{\fS_h}(t) = \sum_{i < (k-1)/2} V_{[h-k+2,2^i,1^{k-2i-2}]}\ t^i.\end{equation}
By the same theorems,
we have
\begin{equation}\label{pmt}P_{\tM_{k,h}}^{\fS_h}(t) = \Res_{\fS_h}^{\fS_{h+1}}P_{U_{k,h+1}}^{\fS_{h+1}}(t) = \sum_{i < (k-1)/2} \Res_{\fS_h}^{\fS_{h+1}}V_{[h-2i+1,(k-2i+1)^i]/[(k-2i-1)^i]}\ t^i\end{equation} and
\begin{equation}\label{qmt}Q_{\tM_{k,h}}^{\fS_h}(t) = \Res_{\fS_h}^{\fS_{h+1}}Q_{U_{k,h+1}}^{\fS_{h+1}}(t) = \sum_{i < (k-1)/2} \Res_{\fS_h}^{\fS_{h+1}}V_{[h-k+2,2^i,1^{k-2i-1}]}\ t^i.\end{equation}

We compute the restrictions using the following lemma.

\begin{lemma}\label{restrictions}
If $\la$ is a partition of $h+1$, then $$\Res_{\fS_h}^{\fS_{h+1}} V_\la = \bigoplus_{\la'}V_{\la'},$$
where $\la'$ ranges over partitions of $h$ with the property that the Young diagram for $\la'$ is obtained from the Young
diagram for $\la$ by removing a single box.
If $\la$ and $\mu$ are partitions with $|\la|-|\mu|=h+1$, then $$\Res_{\fS_h}^{\fS_{h+1}} V_{\la/\mu} = \bigoplus_{\mu'}V_{\la/\mu'},$$
where $\mu'$ ranges over partitions with the property that the Young diagram for $\mu'$ is obtained from the Young
diagram for $\mu$ by adding a single box.
\end{lemma}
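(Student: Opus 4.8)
The plan is to handle the two assertions in sequence, deducing the second from the first. The first assertion is the classical branching rule for the symmetric groups; I would simply cite it (it appears in any standard reference on the representation theory of $\fS_n$, and can also be proved in a line by combining Frobenius reciprocity with the Pieri rule $s_\la\cdot s_{(1)}=\sum_{\la'}s_{\la'}$, the sum ranging over partitions $\la'$ obtained from $\la$ by adding a box). So the real task is to get the skew version out of it.

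For the second assertion, I would first record a reformulation of the defining property of $V_{\la/\mu}$: by Frobenius reciprocity the multiplicity of $V_\nu$ in $V_{\la/\mu}$ equals the multiplicity of $V_\mu\boxtimes V_\nu$ in $\Res_{\fS_{|\mu|}\times\fS_h}^{\fS_{|\la|}}V_\la$, and since the representations $V_\sigma\boxtimes V_\nu$ (for $\sigma$ a partition of $|\mu|$ and $\nu$ a partition of $h$) exhaust the irreducibles of $\fS_{|\mu|}\times\fS_h$, this is equivalent to
$$\Res_{\fS_a\times\fS_b}^{\fS_{|\la|}} V_\la \;=\; \bigoplus_{|\sigma|=a} V_\sigma \boxtimes V_{\la/\sigma} \qquad (a+b=|\la|).$$
Now set $n:=|\mu|$ and $N:=|\la|=n+h+1$, fix a partition of the underlying $N$-element set into blocks of sizes $n$, $1$, and $h$, and compute $\Res_{\fS_n\times\fS_h}^{\fS_N}V_\la$ (the product over the first and third blocks) in two ways. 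Restricting through $\fS_n\times\fS_h\subset\fS_n\times\fS_{h+1}\subset\fS_N$ and applying the displayed identity to $\fS_n\times\fS_{h+1}$ gives $\bigoplus_{|\sigma|=n}V_\sigma\boxtimes\Res_{\fS_h}^{\fS_{h+1}}V_{\la/\sigma}$. Restricting through $\fS_n\times\fS_h\subset\fS_{n+1}\times\fS_h\subset\fS_N$, applying the displayed identity to $\fS_{n+1}\times\fS_h$, then the classical branching rule to each factor $\Res_{\fS_n}^{\fS_{n+1}}V_\tau$, and finally regrouping the summands according to which partition $\sigma$ of $n$ they contain, gives $\bigoplus_{|\sigma|=n}V_\sigma\boxtimes\big(\bigoplus_{\sigma'}V_{\la/\sigma'}\big)$, where $\sigma'$ ranges over the partitions obtained from $\sigma$ by adding a box. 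Since the $V_\sigma$ with $|\sigma|=n$ are pairwise non-isomorphic irreducibles of $\fS_n$, comparing the two decompositions forces $\Res_{\fS_h}^{\fS_{h+1}}V_{\la/\sigma}=\bigoplus_{\sigma'}V_{\la/\sigma'}$ for every such $\sigma$, in particular for $\sigma=\mu$, which is the claim.

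There is essentially no obstacle here beyond bookkeeping; the one point that needs a moment's care is checking that the two subgroup chains above realize literally the same copy of $\fS_n\times\fS_h$ inside $\fS_N$, which is why I would fix the block partition (and in particular the location of the single ``extra'' point) once and for all before starting. As a purely cosmetic alternative, one can run the entire argument on the symmetric-function side: under the characteristic map $\Res_{\fS_{m-1}}^{\fS_m}$ corresponds to the operator $s_{(1)}^\perp$ adjoint to multiplication by $s_{(1)}$, and both assertions of the lemma then reduce to the identities $s_{(1)}^\perp s_\la=\sum_{\la'}s_{\la'}$ and $s_{(1)}^\perp s_{\la/\mu}=\sum_{\mu'}s_{\la/\mu'}$, both immediate from the Pieri rule; but this is only a repackaging of the same computation.
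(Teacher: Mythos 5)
Your proof is correct. For the first statement you and the paper both simply invoke the classical branching rule (Pieri). For the second statement, the core ingredients are the same as the paper's — Frobenius reciprocity, the Pieri rule, and the defining property of $V_{\la/\mu}$ — but the packaging is genuinely different. The paper works directly with a test irreducible $V_\nu$: it computes the multiplicity of $V_\nu$ in $\Res_{\fS_h}^{\fS_{h+1}} V_{\la/\mu}$ by one application of Frobenius reciprocity (turning the restriction into $\Ind_{\fS_h}^{\fS_{h+1}} V_\nu = \Ind_{\fS_h\times\fS_1}^{\fS_{h+1}}(V_\nu \boxtimes V_{[1]})$), unwinds the definition of $V_{\la/\mu}$, applies Pieri on the $V_\mu\boxtimes V_{[1]}$ factor, and reads off the answer in three lines. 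You instead first reformulate the definition as the global decomposition $\Res_{\fS_a\times\fS_b}^{\fS_{|\la|}} V_\la = \bigoplus_{|\sigma|=a} V_\sigma\boxtimes V_{\la/\sigma}$, then compute $\Res_{\fS_n\times\fS_h}^{\fS_N} V_\la$ through two intermediate Young subgroups and compare isotypic components. Your route is a bit longer and requires the (correctly flagged) care that the two chains cut out the same copy of $\fS_n\times\fS_h$, but it has the virtue of proving the branching rule for $V_{\la/\sigma}$ for \emph{all} $\sigma$ of size $n$ simultaneously, and the ``compute a restriction two ways'' framework is arguably more transparent than tracking a single multiplicity. Either proof is acceptable; the paper's is tighter, yours is more structural. (Your symmetric-function alternative via $s_{(1)}^\perp$ is also valid and is the shortest of all.)
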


\begin{proof}
The first statement is a well known special case of the Pieri rule.
To prove the second statement, let $\nu$ be any partition of $h$.  By Frobenius reciprocity, the multiplicity of $V_\nu$
in $\Res_{\fS_h}^{\fS_{h+1}} V_{\la/\mu}$ is equal to the dimension of the hom space from 
$\Ind_{\fS_h}^{\fS_{h+1}} V_\nu$ to $V_{\la/\mu}$, which is in turn equal to the multiplicity of $V_\la$ in
$$\Ind_{\fS_{|\nu|}\times\fS_1\times\fS_{|\mu|}}^{\fS_{h+1}} V_\nu\boxtimes V_{[1]}\boxtimes V_\mu.$$
By the Pieri rule, this may be reinterpreted as the sum over all $\mu'$ of the stated form of the multiplicity of $V_\la$ in
$$\Ind_{\fS_{|\nu|}\times\fS_{|\mu'|}}^{\fS_{h+1}} V_\nu\boxtimes V_{\mu'}.$$
In other words, it is the multiplicity of $V_\nu$ in $\bigoplus_{\mu'}V_{\la/\mu'}$.
\end{proof}

Applying the second statement of Lemma \ref{restrictions} to Equation \eqref{pmt}, we find that
\begin{equation}\label{pmt2}P_{\tM_{k,h}}^{\fS_h}(t) = \sum_{i < (k-1)/2} \Big(V_{[h-2i+1,(k-2i+1)^i]/[(k-2i-1)^i,1]}
+ V_{[h-2i+1,(k-2i+1)^i]/[k-2i,(k-2i-1)^{i-1}]}\Big)\ t^i,\end{equation}
where we interpret the second term inside the parentheses to be zero if $i=0$.  Similarly,
applying the first statement of Lemma \ref{restrictions} to Equation \eqref{qmt}, we find that
\begin{equation}\label{qmt2}Q_{\tM_{k,h}}^{\fS_h}(t) = \sum_{i < (k-1)/2} \Big(
V_{[h-k+2,2^i,1^{k-2i-2}]}
+ V_{[h-k+2,2^{i-1},1^{k-2i}]}
+ V_{[h-k+1,2^i,1^{k-2i-1}]}
\Big)\ t^i,\end{equation}
where we interpret the second term to be zero if $i=0$, and we interpret the third term to be zero if $i>0$ and $k=h$.

\begin{proof}[Proof of Theorem \ref{explicit}.]
The $k=1$ case was treated as the base case of the induction in the proof of Theorem \ref{main}, 
so we may assume that $k>1$.
We compute $p_{k,h}^{\fS_h}(t)$ by taking the difference between Equations \eqref{pmt2} and \eqref{pm}.
We observe that we have an isomorphism $$V_{[h-2i+1,(k-2i+1)^i]/[(k-2i-1)^{i},1]} \cong V_{[h-2i,(k-2i)^i]/[(k-2i-2)^i]}$$
of skew Specht modules, which follows from the fact that the skew diagrams $$[h-2i+1,(k-2i+1)^i]/[(k-2i-1)^{i},1]\and
[h-2i,(k-2i)^i]/[(k-2i-2)^i]$$ are related by a horizontal translation \cite[Proposition 2.3.5, Lemma 2.3.12]{kleshchev}.
This leads to a cancelation which gives us the formula for $p_{k,h}^{\fS_h}(t)$ stated in Theorem \ref{explicit}.
We compute $q_{k,h}^{\fS_h}(t)$ by taking the difference between Equations \eqref{qmt2} and \eqref{qm}.
\end{proof}

\section{The V\'amos matroid}
In this section we consider the {\bf V\'amos matroid} $V$, which is the smallest non-realizable matroid.  The ground set of $V$ is equal to $[8]$,
and it is a paving matroid of rank 4 with 5 circuit-hyperplanes corresponding to the five shaded rectangles in Figure 1.
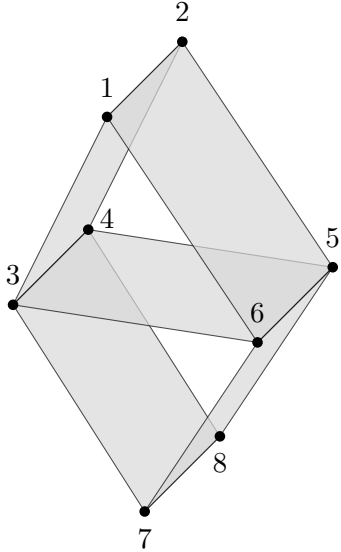
\begin{figure}[h]
\begin{center}
\begin{tikzpicture}
\draw[fill=gray!30,opacity=.7]    (0,0) -- (1,1) -- (-.25,-1.5)--(-1.25,-2.5)--(0,0);
\draw[fill=gray!30,opacity=.7]    (.5,-5.25) -- (1.5,-4.25) -- (-.25,-1.5)--(-1.25,-2.5)--(.5,-5.25);
\draw[fill=gray!30,opacity=.7]    (.5,-5.25) -- (1.5,-4.25) --(3,-2)-- (2,-3)--(.5,-5.25);
\draw[fill=gray!30,opacity=.7]    (-.25,-1.5)--(-1.25,-2.5) -- (2,-3)--(3,-2)--(-.25,-1.5);
\draw[fill=gray!30,opacity=.7]    (0,0) -- (1,1) -- (3,-2)--(2,-3)--(0,0);

\foreach \Point in {(0,0), (1,1), (.5,-5.25), (1.5,-4.25), (3,-2), (2,-3),(-.25,-1.5),(-1.25,-2.5)}{
    \fill \Point circle[radius=2pt];

}
    \node[label={$1$}] (a) at (0,0) {};
    \node[label={$2$}] (a) at (1,1) {};
    \node[label={$3$}] (a) at (-1.25,-2.5) {};
    \node[label={$4$}] (a) at (0,-1.75) {};
    \node[label={$5$}] (a) at (3,-2) {};
    \node[label={$6$}] (a) at (2,-3) {};
    \node[label={$7$}] (a) at (.5,-6) {};
    \node[label={$8$}] (a) at (1.5,-5) {};
\end{tikzpicture}
\end{center}
\caption{A visual representation of the circuit-hyperplanes for $V$.}\label{fig:vamos}
\end{figure}
The automorphism group $W$ of $V$ is generated by the following four elements:
$$r_1 = (12),\quad s_1 = (17)(28), \quad r_2 = (34), \quad\text{and}\quad s_2 = (35)(46).$$
Note that $W \cong D_4\times D_4$, where the first factor is generated by $r_1$ and $s_1$ and the second factor is generated by $r_2$ and $s_2$.

Let $H := \{1,2,3,4\}$ and $H' := \{3,4,5,6\}$.  The orbit of $H$ under the action of $W$ consists of the four circuit hyperplanes other than $H'$, and the
stabilizer of $H$ is $$\WH = \langle (12), (34), (56), (78)\rangle \cong\fS_2^4.$$
In contrast, $H'$ is fixed by $W$.  By Theorem \ref{main}, we have
$$P_V^W(t) = \Res^{\fS_8}_W P_{U_{4,8}}^{\fS_8}(t) - \Ind_{\WH}^W\Res^{\fS_4}_{\WH} p_{4,4}^{\fS_4}(t) - \Res^{\fS_4}_W p_{4,4}^{\fS_4}(t).$$
Here the first restriction is the pullback along the homomorphism from $\WH$ to $\fS_4$ given by the action of $\WH$ on $H\cong [4]$,
while the second is the pullback along the homomorphism from $W$ to $\fS_4$ given by the action of $W$ on $H'\cong [4]$.

The formula for $P_{U_{4,8}}^{\fS_8}(t)$
is given in \cite[Theorem 3.1]{gpy-equi-kl} or \cite[Theorem 3.7]{GXY21-inverse-equivariant}, and the formula for $p_{4,4}^{\fS_4}(t)$ is given in Theorem \ref{explicit}.
Note that the constant term of $P_{U_{4,8}}^{\fS_8}(t)$ is equal to the trivial representation of dimension 1, as is the case for all loopless matroids \cite[Corollary 2.10]{gpy-equi-kl}.
All three polynomials are linear, so the only nontrivial calculation is of the coefficient of $t$.

The calculation can be done explicitly using character tables.  We use the following standard representation of the character table for $D_4$:
\begin{center}
\begin{tabular}{|c|c|c|c|c|c|}
\hline  & $e$ & $s$ & $r^2$ & $sr$ & $r$\\ \hline
$\chi_1$& $1$& $1$ & $1$ & $1$ & $1$\\\hline
$\chi_2$& $1$& $1$ & $-1$ & $1$ & $-1$\\\hline
$\chi_3$& $1$ & $-1$ & $-1$ & $1$ & $1$\\\hline
$\chi_4$& $1$&$-1$ & $1$ & $1$ &$-1$\\\hline
$\chi_5$& $2$& $0$ & $0$ & $-2$ & $0$\\\hline
\end{tabular}
\end{center}
The irreducible characters of $W\cong D_4\times D_4$ are of the form $\chi_i\boxtimes\chi_j$ for $i,j\in\{1,\ldots,5\}$.
After performing all of the restrictions and inductions,
we find that the character of the linear term of $P_M^W(t)$ is equal to
\begin{align*}
3\chi_1\boxtimes \chi_1+\chi_1\boxtimes\chi_2	+ \chi_1\boxtimes\chi_4+2\chi_2\boxtimes\chi_1 +\chi_2\boxtimes\chi_2+\chi_2\boxtimes\chi_4+\chi_4\boxtimes\chi_1+\chi_4\boxtimes\chi_2\\+\chi_1\boxtimes\chi_5+\chi_2\boxtimes\chi_5+\chi_4\boxtimes\chi_5+2\chi_5\boxtimes\chi_1+\chi_5\boxtimes\chi_2+\chi_5\boxtimes\chi_5+2\chi_5\boxtimes\chi_5.
\end{align*}
We observe that the value of this character on the identity is 33, so the non-equivariant Kazhdan--Lusztig polynomial of $V$ is $P_V(t) = 1 + 33t$.

\section{Steiner systems}
A {\bf Steiner system} of type $(d,k,n)$ consists of a set $E$ of cardinality $n$ along with a family $\cH$ of $k$-element subsets (called {\bf blocks}) with the 
property that every $d$-element subset of $E$ is contained in exactly one block.  
A Steiner system $(E,\cH)$ of type $(d,k,n)$ determines a paving matroid of rank $d+1$ on the ground set $E$ 
characterized by the property that $\cH$ is the set of hyperplanes \cite[Chapter 12.3]{welsh}.
Given a Steiner system $(E,\cH)$ of type $(d,k,n)$ and an element $e\in E$, 
one can construct a new Steiner system $(E/e, \cH/e)$ of type $(d-1,k-1,n-1)$ by putting $E/e := E\smallsetminus\{e\}$ and $\cH/e := \{H\smallsetminus\{e\}\mid e\in H\in\cH\}$.

There is a unique Steiner system of type $(5,6,12)$ up to isomorphism, which is typically denoted $S(5,6,12)$.  The automorphism group of $S(5,6,12)$ is the Mathieu group $M_{12}$.
This group acts 4-transitively on the ground set, and the stabilizer of a point is the Mathieu group $M_{11}$.  Thus we may perform the aforementioned operation to obtain a Steiner
system $S(4,5,11)$ with an action of $M_{11}$.

There is also a unique Steiner system of type $(5,8,24)$ up to isomorphism, which is denoted $S(5,8,24)$, and is known as the {\bf Witt geometry}.
The automorphism group of $S(5,8,24)$ is the Mathieu group $M_{24}$, which 
acts 5-transitively on the ground set.  The stabilizer of a single point is the Mathieu group $M_{23}$, which acts on the corresponding Steiner system $S(4,7,23)$.
The stabilizer of a pair of points is the Mathieu group $M_{22}$, which acts on the corresponding Steiner system $S(3,6,22)$.
The Mathieu groups $M_{11}$, $M_{12}$, $M_{22}$, $M_{23}$, and $M_{24}$ are all sporadic finite simple groups.

\begin{remark}
The Mathieu groups $M_{11}$, $M_{12}$, $M_{23}$, and $M_{24}$ are each equal to the automorphism groups of their corresponding Steiner systems.
In contrast, $M_{22}$ is the unique index 2 subgroup of the automorphism group of $S(3,6,22)$.
\end{remark}

We will use the same notation to refer to a Steiner system and its associated matroid.  For example, we will denote by
$P_{S(5,8,24)}^{M_{24}}(t)$ the $M_{24}$-equivariant Kazhdan--Lusztig polynomial of the matroid associated with the Steiner
system $S(5,8,24)$.  We will refer to irreducible characters of the Mathieu groups by the same indices used in the {$\Bbb{ATLAS}$} of Finite  Groups \cite{atlas}.

\begin{proposition}
The equivariant Kazhdan--Lusztig polynomials of the matroids associated with the aforementioned Steiner systems are characterized
as follows:
\beq
\operatorname{char} P_{S(4,5,11)}^{M_{11}}(t) &=& \chi_1 + \left(\chi_5+ \chi_8 \right) t + \left(\chi_5+ \chi_8 \right)t^2\\
\operatorname{char} P_{S(5,6,12)}^{M_{12}}(t) &=& \chi_1
    + \left(\chi_3+ \chi_7+ \chi_8 \right) t
    +\left(\chi_3+ \chi_7+ \chi_8+ \chi_{11}+ \chi_{12}+ \chi_{14} \right)t^2
 \\
\operatorname{char} P_{S(3,6,22)}^{M_{22}}(t) &=& \chi_1 + \chi_5\, t\\
\operatorname{char} P_{S(4,7,23)}^{M_{23}}(t) &=& \chi_1 + \chi_5\, t + \chi_9\, t^2\\
\operatorname{char} P_{S(5,8,24)}^{M_{24}}(t) &=& \chi_1 +  \left(\chi_8 + \chi_9 \right) t + \left(\chi_9+\chi_{14}+ \chi_{21} \right) t^2.
\eeq
Non-equivariantly, we have
\beq
P_{S(4,5,11)}(t) &=& 1 + 55t + 55t^2\\
P_{S(5,6,12)}(t) &=& 1 +120t+429t^2\\
P_{S(3,6,22)}(t) &=& 1 + 55t\\
P_{S(4,7,23)}(t) &=& 1 +230t+253t^2\\
P_{S(5,8,24)}(t) &=& 1+735t+4830t^2.
\eeq
\end{proposition}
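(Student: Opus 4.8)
The plan is to realize each of the five matroids as a single $W$-orbit relaxation away from a uniform matroid, and then to reduce everything to a finite character computation via Theorems \ref{main} and \ref{explicit}. Fix a Steiner system $(E,\cH)$ of type $(d,k,n)$ whose automorphism group — or, in the case of $S(3,6,22)$, its index-two subgroup $M_{22}$ — is the Mathieu group $W$ under consideration, and let $M$ be the associated paving matroid of rank $d+1$, so that $\cH$ is exactly the set of hyperplanes of $M$. First I would record three elementary observations. (i) Every block is a \emph{stressed} hyperplane: in a paving matroid of rank $r$ a hyperplane has rank $r-1$, so each $r$-element subset of it has rank $r-1$ and is dependent while all of its proper subsets are independent, hence it is a circuit; here $r=d+1\le k$, so this applies to every $H\in\cH$. (ii) Each Mathieu group acts $d$-transitively on its point set, and since each $d$-subset of $E$ lies in a unique block, this forces transitivity on the blocks, so $\cH$ is a single $W$-orbit of stressed hyperplanes. (iii) A $(d+1)$-element subset of $E$ that fails to be a basis of $M$ has rank $d$, hence lies in a rank-$d$ flat, i.e.\ in a block; therefore relaxing the whole orbit $\cH$ turns every $(d+1)$-element subset into a basis, so $\tilde M\cong U_{d+1,n}$. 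Applying Theorem \ref{main} to the orbit $\cH$, with $H\in\cH$ a fixed block of cardinality $k$ and $\WH=\stab_W(H)$, and using that equivariant polynomials are compatible with restriction along $W\hookrightarrow\fS_n$, gives
$$P_{M}^{W}(t)\;=\;\Res^{\fS_n}_{W}P_{U_{d+1,n}}^{\fS_n}(t)\;-\;\Ind_{\WH}^{W}\Res_{\WH}^{\fS_k}p_{d+1,k}^{\fS_k}(t),$$
where the first restriction is along the defining permutation action of $W$ on $E$ and the second along the action of $\WH$ on $H$.

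Both terms on the right are now known explicitly: $P_{U_{d+1,n}}^{\fS_n}(t)$ is the sum of skew Specht modules of \cite[Theorem 3.1]{gpy-equi-kl} (equivalently \cite[Theorem 3.7]{GXY21-inverse-equivariant}), and $p_{d+1,k}^{\fS_k}(t)$ is the sum of skew Specht modules of Theorem \ref{explicit}. Since $d\le 5$, the polynomial $P_M^W(t)$ has degree at most $2$, and its constant term is the trivial character because $M$ is loopless \cite[Corollary 2.10]{gpy-equi-kl}; thus only the coefficients of $t$ and $t^2$ need to be computed. For each of the five systems this is a bounded computation: identify $\WH$ and the two homomorphisms to symmetric groups, expand the relevant skew Specht modules of $\fS_n$ and of $\fS_k$ into irreducibles, pull them back along those homomorphisms, induce $\Res_{\WH}^{\fS_k}p_{d+1,k}^{\fS_k}(t)$ from $\WH$ up to $W$, and subtract. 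I would carry this out with the SageMath code referenced in the introduction, reading off the decomposition of the answer into irreducible characters of $W$ from the ATLAS \cite{atlas}. Evaluating characters at the identity then yields the stated non-equivariant polynomials, which also match the formulas obtained directly from \cite{fnv2021} — a convenient independent check.

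The step needing the most care is the bookkeeping of the two pullbacks, and the identification of the block stabilizers $\WH$. For $S(5,8,24)$ and $W=M_{24}$ the block stabilizer is the octad stabilizer $2^4{:}A_8$, which maps onto $\fS_8$ via its action on the octad, while $M_{24}$ maps into $\fS_{24}$ tautologically; the corresponding data for $S(4,7,23)$, $S(3,6,22)$, $S(5,6,12)$, and $S(4,5,11)$ are the stabilizers in $M_{23}$, $M_{22}$, $M_{12}$, and $M_{11}$ of a heptad, a hexad, a hexad, and a pentad, together with their permutation actions on those blocks and their tautological actions on $[23],[22],[12],[11]$. One must keep in mind that $\Res^{\fS_n}_{W}P_{U_{d+1,n}}^{\fS_n}(t)$ and $\Ind_{\WH}^{W}\Res_{\WH}^{\fS_k}p_{d+1,k}^{\fS_k}(t)$ are honest characters of $W$ whose difference is a priori only a virtual character, so the cancellations must be tracked numerically rather than at the level of partitions; that the difference is nevertheless an honest character is precisely Conjecture \ref{gedeon} for paving matroids, which gives a final consistency check on the computation.
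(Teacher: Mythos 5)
Your proposal is correct and follows essentially the same approach as the paper: realize each Steiner system matroid as a single $W$-orbit relaxation of a uniform matroid, apply Theorems \ref{main} and \ref{explicit}, and reduce to a finite character computation against the ATLAS tables via the identity $P_M^W(t) = \Res^{\fS_n}_W P_{U_{d+1,n}}^{\fS_n}(t) - \Ind_{\WH}^W \Res^{\fS_k}_{\WH} p_{d+1,k}^{\fS_k}(t)$. Your preliminary observations (i)--(iii) --- that every block is a stressed hyperplane, that $d$-transitivity forces transitivity on blocks, and that relaxing the whole orbit yields the uniform matroid --- are exactly the facts the paper takes for granted when it writes down this identity for $S(5,8,24)$, so spelling them out is a useful supplement rather than a different argument.
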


\begin{proof}
All of these calculations are done using only Theorems \ref{main} and \ref{explicit} along with the character tables found in the
{$\Bbb{ATLAS}$}.  We provide a brief outline of the calculation only for the most interesting case, namely $S(5,8,24)$.

The ground set of the matroid $S(5,8,24)$ is $\{1,\ldots,24\}$. 
The group $M_{24}$ acts transitively on the set of blocks.
We have a distinguished block $H = \{1,\ldots,8\}$, whose stabilizer group is isomorphic to
$\mathfrak{A}_8 \ltimes \mathbb{F}_2^4$, where the alternating group $\mathfrak{A}_8 \cong \operatorname{GL}_4(\mathbb{F}_2)$
acts linearly on the vector space $\mathbb{F}_2^4$.  The homomorphism from the stabilizer group to 
$\fS_H \cong \fS_8$ is given by the projection
onto $\mathfrak{A}_8$ followed by the inclusion of $\mathfrak{A}_8$ into $\fS_8$.
Theorem \ref{main} tells us that
$$P_{S(5,8,24)}^{M_{24}}(t) = \Res_{M_{24}}^{\fS_{24}} P_{U_{6,24}}^{\fS_{24}}(t) - \Ind_{\mathfrak{A}_8 \ltimes \mathbb{F}_2^4}^{M_{24}}\Res^{\fS_8}_{\mathfrak{A}_8 \ltimes \mathbb{F}_2^4} p_{6,8}^{\fS_8}(t).$$
Using the formula for $P_{U_{6,24}}^{\fS_{24}}(t)$ in  \cite[Theorem 3.1]{gpy-equi-kl} or \cite[Theorem 3.7]{GXY21-inverse-equivariant} and the formula for $p_{4,4}^{\fS_4}(t)$ given in Theorem \ref{explicit}, this becomes a straightforward (if cumbersome) computer computation.
\end{proof}

\bibliography{bibliography}
\bibliographystyle{amsalpha}

\end{document}